\newtheorem{theorem}{Theorem}[section]
\newtheorem{proposition}[theorem]{Proposition}
\newtheorem{lemma}[theorem]{Lemma}
\newtheorem{corollary}[theorem]{Corollary}
\newtheorem{rem}[theorem]{Remark}
\def\cC{\mathcal C}
\def\cL{\mathcal L}
\def\fq{{\mathbb F}_q}
\def\F{\mathbb F}
\def\ord{\mbox{\rm ord}}
\def\gg{\mathfrak{g}}
\newcommand{\vp}{\varphi}
\newcommand{\xx}{\mathcal X}
\newcommand{\yy}{\mathcal Y}
\newcommand{\aut}{\mbox{\rm Aut}}
\newcommand{\kk}{\mathbb K}
\newcommand{\oo}{\mathcal O}
\newcommand{\vv}{\mathbf{v}}
\newcommand{\ga}{\alpha}
\newcommand{\ha}{{\textstyle\frac{1}{2}}}
\title{Curves containing all points of a finite projective Galois plane}
\date{}
\author{Gregory Duran Cunha}
\begin{document}
\maketitle


    \begin{abstract} In the projective plane $PG(2,q)$ over a finite field of order $q$, a Tallini curve is a
     plane irreducible (algebraic) curve of (minimum) degree $q+2$ containing all points of $PG(2,q)$. Such curves
     were investigated by G.~Tallini \cite{T1,T2} in 1961, and by Homma and Kim \cite{homma} in 2013. Our results concern the automorphism groups, the Weierstrass semigroups, the Hasse-Witt invariants, and quotient curves of the Tallini curves.
     \end{abstract}

\providecommand{\keywords}[1]{\textbf{\textit{Keywords:}} #1}

\keywords{algebraic curve, finite field, automorphism, Weierstrass semigroup}

 \section{Introduction}

A fundamental result on plane irreducible (algebraic) curves defined over a finite field $\mathbb{F}_q$ is the Hasse-Weil bound
$$S_q\leq q+1+(n-1)(n-2)\sqrt{q}$$
where $n$ is the degree of the curve and $S_q$ is the number of its points lying in the projective plane $PG(2,q)$ of order $q$; see \cite[Section 9.6]{hirschfeld-korchmaros-torres2008}.
Any plane (possibly reducible) curve containing all points of $PG(2,q)$ has degree at least $q+1$, and if equality holds then the curve splits into the $q+1$ lines of a pencil in $PG(2,q)$. A complete classification of plane  irreducible curves of degree $q+2$ containing all points of $PG(2,q)$ was given by G.~Tallini \cite{T1,T2}; see also \cite{AK}, and \cite{homma}. Up to projective transformations in $PG(2,q)$, each such curve $\xx$ has homogeneous equation of type
\begin{equation}\label{1}
(aX_0 + bX_1 + cX_2) \vp_{01} - X_0 \vp_{02} + X_2 \vp_{12} = 0,
\end{equation}
where $\vp_{ij} = X_i^qX_j - X_iX_j^q$ and $a,b,c$ are elements in $\fq$ such that the cubic equation
\begin{equation}\label{eq2}
X^3 - cX^2 - aX - b = 0
\end{equation}
is irreducible over $\fq$. In this paper, the above irreducible curve $\xx$ of degree $n=q+2$ is named \emph{Tallini curve}.

G.~Tallini proved that $\xx$ has no singular points in $PG(2,q)$. Homma and Kim \cite[Section 3]{homma} extended his result to any point in $PG(2,\kk)$ where $\kk$ is the algebraic closure of $\mathbb{F}_q$. Therefore, $\xx$ is a plane nonsingular curve of genus $\gg=\ha (n-1)(n-2)=\ha q(q+1)$.

 G.~Tallini showed that the automorphism group $G_q$ of $\xx$ over $\mathbb{F}_q$ contains a Singer cycle, that is, a cyclic subgroup $S$ of $PGL(3,q)$ of order $q^2+q+1$ acting on $PG(2,q)$ as a regular permutation group. He also claimed that  $G_q$ may be a bit larger but only for some special curves, named harmonic and equianharmonic curves in \cite{T1,T2}.  More precisely, Homma and Kim proved \cite[Theorem 5.4]{homma} that if $G_q$ with $q>2$ is larger than $S$ then $G_q$ is the normalizer of $S$ in $PGL(3,q)$, that is, $G_q=S\rtimes C_3$, the semidirect product of $S$ by a group $C_3$ of order $3$.

In this paper we go on with the study of the Tallini curves, also from the function field point of view. We look at the Tallini curves in the projective plane $PG(2,\kk)$ defined over the algebraic closure $\kk$ of $\fq$. Our Theorem
 \ref{20jun2016} shows that up to projective equivalence in $PG(2,\kk)$, the Tallini curve $\xx$ is projectively equivalent to the curve 
 \begin{equation}
\label{3}
X_1^{q+1}X_2+X_2^{q+1}X_0+X_0^{q+1}X_1=0.
\end{equation}
For $q=2$, $\xx_q$ is the famous plane Klein quartic whose automorphism group is isomorphic to $PSL(2,7)$. 
We mention that the curve $\xx_q$ was first investigated in \cite{pell}, and we refer to it as the \emph{Pellikaan curve}.
 From the proof of Theorem \ref{20jun2016}, the smallest projective plane $PG(2,q^{3i})$ where this equivalency occurs is in general much larger than $PG(2,q^3)$ as $\mathbb{F}_{q^{3i}}$
 turns out to be the smallest overfield of $\mathbb{F}_{q^3}$ containing the roots of the equation
 $X^{q^2+q+1} = (\ga^q - \ga)^{q^2+q-2}$ where $\ga$ is a root of (\ref{eq2}). Here $i$ divides $q^2+q+1$ and the automorphism group of $\xx$ in $PG(2,\kk)$ is isomorphic to $S\rtimes C_3$ where $S$ is defined over $\mathbb{F}_q$ but $C_3$ is in general defined over $\mathbb{F}_{q^{3i}}$.

 For every divisor $d$ of $q^2+q+1$, the curve $\xx_q$ has a quotient curve $\xx_q/C_d$ with respect to a cyclic group $C_d$ of order $d$. In case where $q$ is a square, that is, $q=p^{2i}$ with $p$ prime and $i\geq 1$, the factorization $p^{4i}+p^{2i}+1=(p^{2i}+p^i+1)(p^{2i}-p^i+1)$ raises the question whether the quotient curve $\xx_q/C_d$ with $d=p^{2i}-p^i+1$ is isomorphic to $\xx_{p^i}$. The answer is affirmative, see Theorem \ref{th220jun2016}.

 We also show that $\xx_p$ is an ordinary curve, that is, its genus $\gg=\ha p(p+1)$ coincides with its Hasse-Witt invariant. For this purpose, we prove that no exact differential of $\kk(\xx_p)$ is regular, and then use the properties of the Cartier operator to show that $\xx_p$ is ordinary. It should be noticed that this result does not hold true for $q>p$; see \cite{ms}.


\section{Irreducible curves of minimal degree containing all points of $PG(2,q)$}

Let $\xx$ be an irreducible plane curve of degree $q+2$ defined over $\fq$ containing all points of $PG(2,q)$. In his paper \cite{T1}, Tallini proved that such a curve exists and it has equation \eqref{1}.

 Now, we recall some facts from \cite{T1}. The polar net of $\xx$ is a net of conics and it is easy to see that this net is homaloidal with the three base points, namely $A_0 = (\ga_0 : 1 : \ga_0^2)$, $A_1 = (\ga_1 : 1 : \ga_1^2)$ and $A_2 = (\ga_2 : 1 : \ga_2^2)$ where $\ga_0, \ga_1, \ga_2$ are the three solutions of \eqref{eq2} in $\F_{q^3}$. In particular, the points $A_i$ are conjugate over $\F_{q^3}$. Furthermore, they belong to $\xx$ and they are simple points for it. We call each of these three points a base point of $\xx$.

Also in \cite{T1}, Tallini showed that the automorphism group of $\xx$ contains a Singer cycle, that is, a cyclic subgroup $S$ of $PGL(3,q)$ of order $q^2+q+1$ acting on $PG(2,q)$ as a regular permutation group. Moreover, $S$, viewed as a subgroup of $PGL(3,q^3)$, fixes $A_0$, $A_1$, $A_2$.

The following result was proved in \cite{homma}.

\begin{theorem} \label{t2}
The curve $\xx$ is non-singular.
\end{theorem}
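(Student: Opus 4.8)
The plan is to show that the Tallini curve $\xx$ with homogeneous equation \eqref{1} has no singular points anywhere in $PG(2,\kk)$, where $\kk$ is the algebraic closure of $\fq$. Since $\xx$ is a plane curve given by a single homogeneous polynomial $F=0$, a point $P$ is singular if and only if all three partial derivatives $\partial F/\partial X_0$, $\partial F/\partial X_1$, $\partial F/\partial X_2$ vanish at $P$ simultaneously (with $F(P)=0$ following from Euler's relation in the appropriate characteristic, which I would handle carefully since $\deg F = q+2$). So the first step is to write out $F = (aX_0+bX_1+cX_2)\vp_{01} - X_0\vp_{02} + X_2\vp_{12}$ with $\vp_{ij}=X_i^qX_j - X_iX_j^q$, and compute the three partials. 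Because we are in characteristic $p$, the terms $X_i^q$ differentiate to $0$, which makes the derivatives considerably simpler: each $\vp_{ij}$ contributes a $q$-th power factor on one side and a constant-degree term on the other.

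Next I would analyze the resulting system of three polynomial equations for a common zero. The key structural feature is that after differentiation, the $q$-th power monomials $X_i^q$ appear as ``frozen'' quantities, so the system becomes effectively linear in the variables $X_0^q, X_1^q, X_2^q$ with coefficients that are linear forms in $X_0,X_1,X_2$. I would set up this linear system and study when its coefficient matrix (a $3\times 3$ matrix whose entries are linear forms) can be degenerate at a point of $PG(2,\kk)$; the determinant is a homogeneous form whose vanishing locus, intersected with the vanishing of $F$ itself, must be shown to be empty. The irreducibility hypothesis on the cubic \eqref{eq2}, i.e. that $X^3 - cX^2 - aX - b$ has no root in $\fq$, is exactly what should force the relevant determinant or auxiliary resultant to be nonzero, ruling out singular points.

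The main obstacle I anticipate is controlling the geometry at the ``exceptional'' candidate singular points, namely the three base points $A_0, A_1, A_2$ over $\F_{q^3}$ and possibly points on the coordinate lines where several $\vp_{ij}$ vanish at once. At such points the generic determinant argument may degenerate, and I would need a local analysis: either compute the Hessian or examine the lowest-degree terms of $F$ in local affine coordinates to confirm smoothness. Since the preceding discussion already records that the base points $A_i$ are simple points of $\xx$, I can invoke that fact directly and thereby dispose of the most delicate case; the remaining cases should reduce to showing that the linear-in-$q$-th-powers system forces $X_0=X_1=X_2=0$, which is not a projective point.

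Finally, to assemble the proof I would combine the generic smoothness (determinant nonvanishing away from a small bad locus) with the case-by-case treatment of the bad locus, using the irreducibility of \eqref{eq2} at each stage to exclude $\fq$-rational degeneracies and using conjugacy over $\F_{q^3}$ to handle the base points uniformly. The conclusion is that $F$ and its three partials have no common zero in $PG(2,\kk)$, so $\xx$ is nonsingular; this then justifies, via the degree-genus formula for a smooth plane curve, the stated genus $\gg=\ha(n-1)(n-2)=\ha q(q+1)$.
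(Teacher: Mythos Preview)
Your approach is a direct computational attack on the singular locus via the Jacobian criterion, and it is a sound strategy in principle (indeed it is close in spirit to how Homma and Kim originally handled it). However, it is genuinely different from the proof the paper gives, which is purely group-theoretic and avoids all computation with partial derivatives.

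The paper argues as follows: the Singer cycle $S$ of order $q^2+q+1$ acts on $\xx$, so it permutes the set of singular points. Any $S$-orbit has size $1$ or $q^2+q+1$; but an irreducible plane curve of degree $q+2$ has at most $\binom{q+1}{2}=q(q+1)/2<q^2+q+1$ singular points, so a singular point would have to be $S$-fixed. The only $S$-fixed points in $PG(2,\kk)$ are the three base points $A_0,A_1,A_2$, and Tallini already showed these are simple on $\xx$. Done.

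What each approach buys: the paper's argument is two lines long once you have (i) the classical bound on the number of singularities of an irreducible plane curve and (ii) Tallini's facts about $S$ and the base points --- both of which are already recorded in the preceding discussion. Your route is self-contained in that it does not need the singularity bound or the group action, but it requires you to actually carry out and control the determinant/resultant analysis you describe, and to verify case-by-case that the irreducibility of \eqref{eq2} kills every candidate; you have outlined this but not executed it, and the execution is where all the work lies. If you want the short proof, exploit the Singer cycle.
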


\begin{proof}
If $P$ is a singular point of $\xx$ then its orbit under $S=  \langle \phi \rangle$
$$
\oo = \{  \phi^i(P)  \; | \;  0 \leq i \leq q^2 + q  \}
$$
consists of singular points of $\xx$. Note that the size of $\oo$ is $1$ or $q^2+q+1$. Since the number of singular points of $\xx$ is at most $q(q+1)/2$, then $\oo = \{ P \}$. This yields that every singular point of $\xx$ is fixed by $S$. Since $S$ fixes only $A_0$, $A_1$ and $A_2$, which are simple points, it follows that $\xx$ has no singular points.
\end{proof}


\label{3}

\begin{theorem}
\label{20jun2016} Any Tallini curve is projectively equivalent to the Pellikaan curve  
over $\F_{q^{3(q^2+q+1)}}$.
\end{theorem}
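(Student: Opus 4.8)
The plan is to diagonalise the Singer cycle over $\F_{q^3}$ and to read off the shape of $\xx$ in the resulting eigencoordinates. A generator of the preimage of $S=\langle\phi\rangle$ in $GL(3,q)$ acts on $\F_q^3\cong\F_{q^3}$ as multiplication by a primitive element $\go$ of $\F_{q^3}$, so over $\F_{q^3}$ it is diagonalisable with eigenvalues $\go,\go^q,\go^{q^2}$ and with the three base points $A_0,A_1,A_2$ as eigenvectors. Let $M=(A_0\,|\,A_1\,|\,A_2)$ be the matrix with these points as columns; it is defined over $\F_{q^3}$ and nonsingular, its determinant being a nonzero Vandermonde-type expression in $\ga_0,\ga_1,\ga_2$. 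After the change of coordinates $X=MZ$ the group $S$ becomes the diagonal group generated by $\diag(\go,\go^q,\go^{q^2})$, and $\xx$ becomes a curve $G(Z_0,Z_1,Z_2)=0$ of degree $q+2$ that is semi-invariant under this diagonal action.

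First I would use this semi-invariance to restrict the monomials of $G$. All monomials $Z_0^aZ_1^bZ_2^d$ of $G$ must share one eigenvalue of $\diag(\go,\go^q,\go^{q^2})$; with $a+b+d=q+2$ this reads $b+d(q+1)\equiv s\pmod{q^2+q+1}$ for a single $s$. I order the eigenvectors so that the diagonal generator equals $\diag(1,\gz^{q+1},\gz)$ with $\gz=\go^{q-1}$ a primitive $(q^2+q+1)$-th root of unity; this is possible because, up to the scalar $\go$, that matrix is $\diag(\go,\go^{q^2},\go^q)$, merely a reordering of $\go,\go^q,\go^{q^2}$. In this labelling the explicit substitution carried out below yields $s=q+1$, and a direct count shows that the only triples with $a,b,d\ge0$, $a+b+d=q+2$ and $b+d(q+1)\equiv q+1\pmod{q^2+q+1}$ are those of $Z_0^{q+1}Z_1$, $Z_1^{q+1}Z_2$ and $Z_2^{q+1}Z_0$. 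Hence $G=c_0Z_0^{q+1}Z_1+c_1Z_1^{q+1}Z_2+c_2Z_2^{q+1}Z_0$; were some $c_i$ zero, $G$ would have a linear factor and $\xx$ would be reducible, against Theorem \ref{t2}, so $c_0c_1c_2\neq0$.

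It then remains to absorb $c_0,c_1,c_2$ by a further diagonal change $Z_i\mapsto\mu_iZ_i$, which rescales the coefficients to $c_0\mu_0^{q+1}\mu_1$, $c_1\mu_1^{q+1}\mu_2$ and $c_2\mu_2^{q+1}\mu_0$. Setting these equal and normalising $\mu_0=1$ gives $\mu_1=(c_2/c_0)\mu_2^{q+1}$ and, after eliminating $\mu_1$, the single equation $\mu_2^{q^2+q+1}=c_0^{q+1}/(c_1c_2^q)\in\F_{q^3}$; once the $c_i$ are written in terms of a root $\ga$ of \eqref{eq2} this is precisely the equation $X^{q^2+q+1}=(\ga^q-\ga)^{q^2+q-2}$ of the Introduction. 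Finally, if $\gth^{q^2+q+1}=\gb$ with $\gb\in\F_{q^3}$, then $(\gth^{q^3}/\gth)^{q^2+q+1}=\gb^{q^3}/\gb=1$, so $\gth^{q^3}=\eta\gth$ with $\eta\in\F_{q^3}$ a $(q^2+q+1)$-th root of unity; iterating the Frobenius $x\mapsto x^{q^3}$ gives $\gth^{q^{3(q^2+q+1)}}=\eta^{q^2+q+1}\gth=\gth$, so $\gth\in\F_{q^{3(q^2+q+1)}}$. Thus a solution $\mu_2$—and with it the whole normalising map, a diagonal matrix times $M^{-1}$—is defined over $\F_{q^{3(q^2+q+1)}}$, and over that field $\xx$ is projectively equivalent to the Pellikaan curve \eqref{3}.

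The main obstacle is the explicit computation underlying the second and third steps: substituting $X=MZ$ into \eqref{1} and simplifying the forms $\vp_{ij}(MZ)$ by means of the Frobenius relations $\ga_l^{q}=\ga_{l+1}$ among the conjugate roots, so as to verify that only the three Pellikaan monomials survive and to obtain $c_0,c_1,c_2$ in closed form. Carrying this out is exactly what produces the element $(\ga^q-\ga)^{q^2+q-2}$ whose $(q^2+q+1)$-th root must be adjoined, and hence also identifies the smallest subfield $\F_{q^{3i}}$, $i\mid q^2+q+1$, over which the equivalence already holds.
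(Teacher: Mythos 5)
Your proof is correct, and while its skeleton coincides with the paper's (the same Vandermonde-type matrix $M$ built from the conjugate roots $\ga_0,\ga_1,\ga_2$, followed by the same diagonal rescaling requiring a $(q^2+q+1)$-th root of an element of $\F_{q^3}$), you handle the crucial middle step differently. The paper simply expands $F(MZ)$ explicitly, obtains the six coefficients $c_{ij}=(\ga_i-\ga_j)^2(\ga_i^q-\ga_i)(\ga_j-\ga_i^q)$, and observes that three of them vanish because $\ga_j=\ga_i^q$ for three of the pairs; you instead invoke semi-invariance of $G$ under the diagonalized Singer cycle to force all monomials into a single weight class, count that the class of weight $q+1$ contains exactly the three Pellikaan monomials, and use irreducibility (Theorem \ref{t2}) to rule out vanishing coefficients. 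This buys a conceptual explanation of \emph{why} only three monomials survive, at the cost of still needing to identify the weight $s=q+1$, which you defer to the "explicit substitution" you never carry out. That last point is fillable without any substitution: since $A_0,A_1,A_2$ are simple points of $\xx$, the transformed curve passes through the three fundamental points and is nonsingular there, so for each $i$ at least one of the two monomials $Z_i^{q+1}Z_j$ ($j\neq i$) occurs in $G$; intersecting the resulting weight constraints for $i=0,1,2$ leaves only $s=q+1$ (or, for the other ordering of eigenvectors, the class giving the transposed Pellikaan form). Two small slips: with the generator written as $\diag(1,\gz^{q+1},\gz)$ the weight of $Z_0^aZ_1^bZ_2^d$ is $b(q+1)+d$, not $b+d(q+1)$ (with your formula the surviving monomials are $Z_0^{q+1}Z_2$, $Z_1^{q+1}Z_0$, $Z_2^{q+1}Z_1$, which is the paper's case \eqref{G2} and is carried to the Pellikaan curve by a transposition of coordinates, so nothing breaks); and your closing argument that any root of $X^{q^2+q+1}=\gb$ with $\gb\in\F_{q^3}^{*}$ lies in $\F_{q^{3(q^2+q+1)}}$ is actually a welcome addition, since the paper asserts the field of definition of $\mu,\lambda$ without justification.
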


\begin{proof}
Let $\ga_0, \ga_1, \ga_2 \in \F_{q^3}$ be the distinct solutions of \eqref{eq2}, and let $\yy$ be the image of $\xx$ under the linear map associated to the non-singular matrix
$$
M = \left(
\begin{array}{ccc}
 \ga_0 &  \ga_1  &  \ga_2  \\
 1 & 1  & 1  \\
  \ga_0^2 & \ga_1^2  &\ga_2^2
\end{array}
\right).
$$
That is, $\yy$ is the curve given by $G(X_0,X_1,X_2)=0$, where
\begin{equation}
\label{tallini_image}
G = F  \Big(  \sum_{i=0}^2 \ga_iX_i , \sum_{i=0}^2 X_i, \sum_{i=0}^2  \ga_i^2X_i  \Big),
\end{equation}
and $\xx = \vv(F)$ is the Tallini curve. A straightforward computation gives
$$
G = c_{01} X_0^{q+1}X_1 + c_{02} X_0^{q+1}X_2 + c_{10} X_1^{q+1}X_0 + c_{12} X_1^{q+1}X_2 + c_{20} X_2^{q+1}X_0 + c_{21} X_2^{q+1}X_1,
$$
where
$$
c_{ij} = (\ga_i - \ga_j)^2(\ga_i^q - \ga_i)(\ga_j - \ga_i^q),  \;  \;  \text{ for }  \;  0 \leq i,j \leq 2.
$$
Note that $c_{ij} = 0$ whenever $\ga_j = \ga_i^q$. Since the Frobenius map acts transitively on  $\{ \ga_0 ,  \ga_1  ,  \ga_2 \}$, it follows that either $(\ga_0 ,  \ga_1  ,  \ga_2) = (\ga_1^q ,  \ga_2^q  ,  \ga_0^q)$ or $(\ga_0 ,  \ga_1  ,  \ga_2) = (\ga_2^q ,  \ga_0^q  ,  \ga_1^q)$. In the former case, we have $c_{10} = c_{02} = c_{21} = 0$ and then
\begin{equation}
\label{G1}
G = c_{01} X_0^{q+1}X_1  +  c_{12} X_1^{q+1}X_2 + c_{20} X_2^{q+1}X_0,
\end{equation}
whereas the latter case gives $c_{01} = c_{12} = c_{20} = 0$ and
\begin{equation}
\label{G2}
G =  c_{02} X_0^{q+1}X_2 + c_{10} X_1^{q+1}X_0 +  c_{21} X_2^{q+1}X_1.
\end{equation}
We prove the result in the case $G$ is given by \eqref{G1}, and then case \eqref{G2} will follow analogously. Note that from $(\ga_0 ,  \ga_1  ,  \ga_2) = (\ga_1^q ,  \ga_2^q  ,  \ga_0^q)$, equation \eqref{G1} can be written as
\begin{equation}
\label{GG}
(\ga_1^q - \ga_1) X_0^{q+1}X_2 + (\ga_1^q - \ga_1)^qX_1^{q+1}X_0 +  (\ga_1 - \ga_1^{q^2}) X_2^{q+1}X_1 = 0.
\end{equation}
Finally, one can easily check that the curve given by \eqref{GG}  is the image of $\xx_q$ under the transformation
$$
(X_0,X_1,X_2)  \mapsto (\mu X_0, \lambda X_1,X_2)
$$
where $\mu,\lambda \in \F_{q^{3(q^2+q+1)}}$ satisfy
$$
\lambda^{q^2+q+1} = \frac{(\ga_1^q - \ga_1)^3}{(\ga_1^q - \ga_1)^{q^2+q+1}}  \;  \text{ and } \; \mu =  \frac{\lambda^{q+1}(\ga_1^{q^2} - \ga_1^q)}{\ga_1 - \ga_1^{q^2}}.
$$
This finishes the proof.
\end{proof}

\section{Weierstrass semigroup at a base point}

Let $\Sigma =\kk(x,y)$, with $xy^{q+1} + x^{q+1} + y = 0$, be the function field of the Pellikaan curve $\xx_q$ and consider the fundamental triangle
$$
O = (0:0:1), \; X_{\infty} = (1:0:0), \; Y_{\infty} = (0:1:0).
$$
The tangent lines to $\xx_q$ at $O$, $X_{\infty}$ and $Y_{\infty}$ are $l_Y = \vv(Y)$, $l_Z = \vv(Z)$ and $l_X = \vv(X)$, respectively. Note that the points in $l_Y \cap \xx_q$ are $O$ and $X_{\infty}$ with
$$
I(O,l_Y \cap \xx_q) = q+1 \text{ and }  I(X_{\infty},l_Y \cap \xx_q) =1,
$$
the points in the intersection $l_Z \cap \xx_q$ are $X_{\infty}$ and $Y_{\infty}$ with
$$
I(X_{\infty},l_Z \cap \xx_q) = q+1 \text{ and }  I(Y_{\infty},l_Z \cap \xx_q) =1,
$$
and the points in the intersection $l_X \cap \xx_q$ are $Y_{\infty}$ and $O$ with
$$
I(Y_{\infty},l_X \cap \xx_q) = q+1 \text{ and }  I(O,l_X \cap \xx_q) =1.
$$

From \cite[Theorem 6.42]{hirschfeld-korchmaros-torres2008} the principal divisor of $x$ is given by
$$
\begin{array}{ccl}
(x) &  = &  l_X \cdot \xx_q - l_Z \cdot \xx_q \\
  &  = &  I(O,l_X \cap \xx_q) O + I(Y_{\infty},l_X \cap \xx_q) Y_{\infty} - I(X_{\infty},l_Z \cap \xx_q)X_{\infty} - I(Y_{\infty},l_Z \cap \xx_q)Y_{\infty} \\
  &  = & O + (q+1)Y_{\infty} - (q+1)X_{\infty} - Y_{\infty} \\
  & = & O + qY_{\infty} - (q+1)X_{\infty}
\end{array}
$$
\noindent and the principal divisor of $y$ is given by
$$
\begin{array}{ccl}
(y) &  = &  l_Y \cdot \xx_q - l_Z \cdot \xx_q \\
  &  = &  I(O,l_Y \cap \xx_q) O + I(X_{\infty},l_Y \cap \xx_q) X_{\infty} - I(X_{\infty},l_Z \cap \xx_q)X_{\infty} - I(Y_{\infty},l_Z \cap \xx_q)Y_{\infty} \\
  &  = & (q+1)O + X_{\infty} - (q+1)X_{\infty} - Y_{\infty} \\
  & = & (q+1)O  - qX_{\infty} -Y_{\infty}.
\end{array}
$$
\noindent
For $1 \leq n \leq q+1$, the above relations gives
$$
\left( \dfrac{y}{x^n}  \right) = (q+1 - n) O + (n(q+1)-q)X_{\infty} - (nq+1)Y_{\infty},
$$
hence the divisor of poles of $y/x^n$ is
$$
\left( \dfrac{y}{x^n}  \right)_{\infty} = (nq+1)Y_{\infty},  \; \;  \text{ for }  1 \leq n \leq q+1.
$$

\begin{theorem}
The Weierstrass semigroup at a base point of $\xx_q$ is the semigroup generated by $q+1, 2q+1, \ldots, (q+1)q+1$.
\end{theorem}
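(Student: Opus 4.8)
The plan is to determine the Weierstrass semigroup $H$ at the base point $Y_\infty$ by producing enough functions with prescribed pole orders concentrated at $Y_\infty$, and then to verify that these pole orders already generate a numerical semigroup whose complement has exactly $\gg = \tfrac12 q(q+1)$ gaps, matching the genus. The starting material is the family computed just above the statement: the functions $y/x^n$ for $1 \le n \le q+1$ have a single pole at $Y_\infty$ of order $nq+1$. Taking $n = 1, 2, \ldots, q+1$ gives pole orders $q+1, 2q+1, 3q+1, \ldots, (q+1)q+1$, so each of these integers lies in $H$, and hence so does the semigroup $T = \langle q+1, 2q+1, \ldots, (q+1)q+1\rangle$ they generate. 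The claim is that $H = T$.

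First I would record the numerical structure of $T$. Its generators are $mq+1$ for $1 \le m \le q+1$, together with the automatic generator $q+1$ (the case $m=q$ gives $q^2+1$, and $m=q+1$ gives $q^2+q+1$; note $q+1$ arises as the $n=1$ pole order of $y/x$ after simplification, or more directly one checks $q+1 \in H$ via a suitable function). The key combinatorial step is to count the gaps of $T$: I would show that an integer is in $T$ precisely when it can be written as $a(q+1) + \sum_m c_m(mq+1)$ with nonnegative coefficients, reduce modulo $q+1$ to track which residue classes are reachable and from which point on, and thereby compute $|\N \setminus T|$. The expected outcome is that this gap count equals exactly $\tfrac12 q(q+1)$.

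The decisive step is then the genus comparison. Since $\xx_q$ has genus $\gg = \tfrac12 q(q+1)$ by the nonsingularity established in Theorem~\ref{t2} (equivalently, by the Pellikaan normal form of Theorem~\ref{20jun2016}), the set of Weierstrass gaps at any point has cardinality exactly $\gg$. We already have $T \subseteq H$, so the gap set of $H$ is contained in the gap set of $T$; if the latter has cardinality $\tfrac12 q(q+1) = \gg$, then the two gap sets coincide, forcing $H = T$. Thus the whole argument collapses to the inclusion $T \subseteq H$ (already in hand) plus the arithmetic identity $|\N \setminus T| = \tfrac12 q(q+1)$.

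The main obstacle will be the clean evaluation of the gap count for $T$, which is a genuine numerical-semigroup computation rather than a geometric one. I would handle it by organizing integers according to their residue modulo $q+1$: since $mq+1 \equiv 1-m \pmod{q+1}$, the generators $mq+1$ for $m=1,\ldots,q+1$ hit every residue class modulo $q+1$, and for each class one finds the smallest representable element (the conductor in that class), so the number of gaps in each class is read off directly and summed. By symmetry of the Pellikaan curve under the cyclic permutation of $O, X_\infty, Y_\infty$, the same semigroup occurs at each base point, so it suffices to treat $Y_\infty$, and this confirms that the answer is independent of the chosen base point.
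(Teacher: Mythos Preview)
Your proposal is correct and follows essentially the same route as the paper: exhibit $q+1,2q+1,\ldots,(q+1)q+1$ as pole orders at $Y_\infty$ via the functions $y/x^n$, and then conclude $H(Y_\infty)=\langle q+1,\ldots,(q+1)q+1\rangle$ by checking that this subsemigroup already has exactly $\tfrac12 q(q+1)$ gaps, matching the genus. The paper simply asserts the gap count without carrying out the residue-class (Ap\'ery set) computation you outline, but the strategy is identical; your only slip is cosmetic---$q+1$ is the case $m=1$ of $mq+1$, not an additional generator.
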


\begin{proof}
We may assume that the base point is $Y_{\infty}$. From the above discussion, $q+1, 2q+1, \ldots, (q+1)q+1$ belong to the Weierstrass semigroup $H(Y_{\infty})$. Let $H= \langle q+1, 2q+1, \ldots, (q+1)q+1  \rangle$ be the semigroup generated by $ q+1, 2q+1, \ldots, (q+1)q+1$. Since $H \subset H(Y_{\infty})$ and the number of gaps in $H$ is $q(q+1)/2$, which is the number of gaps in $H(Y_{\infty})$, the assertion follows.
\end{proof}

\begin{theorem}
If the function field of $\xx_q$ is given by $\kk(x,y)$, with $xy^{q+1} + x^{q+1} + y = 0$, then the divisor of the differential $dx$ is
$$
(dx) = (q^2 + 2q)Y_{\infty} - (q+2)X_{\infty}.
$$
\end{theorem}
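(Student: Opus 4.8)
The plan is to show first that $dx$ is a nonzero differential whose divisor is supported only on $X_\infty$ and $Y_\infty$, and then to pin down the two multiplicities by explicit local computations, systematically exploiting the characteristic-$p$ identity $d(z^q)=0$. Regarding $x$ as a morphism $x\colon\xx_q\to\mathbb{P}^1$, its minimal polynomial over $\kk(x)$ is $\Phi(Y)=xY^{q+1}+Y+x^{q+1}$, and $\Phi'(Y)=xY^{q}+1\ne 0$, so $x$ is a separating variable and $dx\ne 0$; the degree of $x$ equals the degree $q+1$ of its pole divisor $(q+1)X_\infty$. Since $X_\infty$ is the only pole of $x$, the differential $dx$ can have a pole only at $X_\infty$, and its zeros occur only at ramification points of $x$ with finite $x$-value. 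Over $x=c$ with $c\neq 0,\infty$ the affine fibre is cut by $cy^{q+1}+y+c^{q+1}=0$, whose derivative $cy^{q}+1$ vanishes only at $y^{q}=-1/c$; substituting gives value $c^{q+1}\neq 0$, so the fibre is $q+1$ distinct, unramified points. Over $x=0$ the affine fibre is the simple point $O$, the rest being carried by $Y_\infty$ with $v_{Y_\infty}(x)=q$ (wild ramification). As the only places with $X_2=0$ are $X_\infty$ and $Y_\infty$, every other place $P$ has the uniformizer $x-x(P)$ (or $x$ itself at $O$), whence $v_P(dx)=0$. Thus $(dx)=aX_\infty+bY_\infty$ and it remains to compute $a$ and $b$.

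Next I would carry out the local computation at $Y_\infty$. Here $w:=1/y$ is a uniformizer, since $v_{Y_\infty}(y)=-1$, and writing $u:=x/y$ the affine relation becomes $u+u^{q+1}w+w^{q+1}=0$, giving $x=u/w=-u^{q+1}-w^{q}$ and $v_{Y_\infty}(u)=q+1$. Using $d(w^{q})=0$ and $d(u^{q+1})=u^{q}\,du$, I obtain $dx=-u^{q}\,du$, while differentiating the relation yields $du=-\tfrac{u^{q+1}+w^{q}}{1+u^{q}w}\,dw$. Hence $dx=u^{q}\,\tfrac{u^{q+1}+w^{q}}{1+u^{q}w}\,dw$, and reading off the orders $v(u)=q+1$, $v(w)=1$, $v(u^{q+1}+w^{q})=q$, $v(1+u^{q}w)=0$, $v(dw)=0$ gives $v_{Y_\infty}(dx)=q(q+1)+q=q^{2}+2q$.

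The computation at $X_\infty$ is analogous, with uniformizer $s:=y/x$ (indeed $v_{X_\infty}(s)=v(y)-v(x)=1$). Setting $t:=1/x$, the relation becomes $s^{q+1}+t+st^{q+1}=0$, so $x=1/t$ with $v_{X_\infty}(t)=q+1$; then $dx=-t^{-2}\,dt$ and $dt=-\tfrac{s^{q}+t^{q+1}}{1+st^{q}}\,ds$, so $dx=t^{-2}\,\tfrac{s^{q}+t^{q+1}}{1+st^{q}}\,ds$. The orders $v(s)=1$, $v(t)=q+1$, $v(s^{q}+t^{q+1})=q$ give $v_{X_\infty}(dx)=-2(q+1)+q=-(q+2)$. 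Combining the three steps yields $(dx)=(q^{2}+2q)Y_\infty-(q+2)X_\infty$, which I would cross-check against $\deg(dx)=2\gg-2=q(q+1)-2=q^{2}+q-2$.

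I expect the main obstacle to be the place $Y_\infty$. Because $q$ is a power of $p$, the ramification of $x$ there is wild, so the naive tame estimate $e-1=q-1$ is far off; the correct exponent $q^{2}+2q$ emerges only from the explicit local expansion together with the vanishing of $d(w^{q})$ and $d(u^{q})$. The delicate point is verifying the valuations of the intermediate rational functions, in particular that $u^{q+1}+w^{q}$ has order exactly $q$ (its lowest-order term being $w^{q}$, not the higher-order $u^{q+1}$), since an error here would corrupt the final multiplicity.
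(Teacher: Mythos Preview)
Your proof is correct. Both you and the paper first reduce to showing that $(dx)$ is supported on $\{X_\infty,Y_\infty\}$, but the way the two multiplicities are determined is genuinely different. The paper computes only the order at $X_\infty$, and does so in one line: since $v_{X_\infty}(x)=-(q+1)$ and $p\nmid q+1$, the tame formula gives $v_{X_\infty}(dx)=-(q+2)$; the order at $Y_\infty$ is then read off from $\deg(dx)=2\gg-2$. You instead carry out explicit local computations at both $X_\infty$ and $Y_\infty$ via carefully chosen uniformizers and the identities $d(w^q)=d(u^q)=0$, using the degree formula only as a consistency check.

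The paper's approach is shorter and sidesteps the wild ramification at $Y_\infty$ entirely. Your approach is more self-contained and makes the wild contribution $q^2+2q$ (rather than the naive $q-1$) visible through the factor $u^q$ in $dx=-u^q\,du$; this is exactly the delicate point you flagged, and your valuation bookkeeping there is correct, since $v(u^{q+1}+w^q)=\min\bigl((q+1)^2,q\bigr)=q$.
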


\begin{proof}
The curve $\xx_q$ is constituted by two points in the infinity $X_{\infty}$, $Y_{\infty}$ and the affine points $P=(a:b:1)$, with $ab^{q+1} + a^{q+1} + b =0$. The tangent line to $\xx_q$ at an affine point $P=(a,b)$ is not vertical, in fact, suppose by contradiction that $X=0$ is the tangent to $\xx_q = \vv(F)$ at $P$. Then $\frac{\partial F}{\partial Y} = 0$ in the point $P = (a,b)$. It means that $ab^q + 1 = 0$ and hence $a \neq 0$. On the other hand, $ab^{q+1} + a^{q+1} + b =0$ becomes $-b + a^{q+1} + b =0$, and therefore $a=0$, a contradiction.
Thus, a primitive parametrization of $\xx_q$ at $P$ is given by
$$
x(t) = a + t,
$$
$$
y(t) = b_0 + b_1t + \cdots, \; \; b_0 = b.
$$
Therefore, $\ord_{P} dx = 0$, for all affine point $P$ in $\xx_q$. So it follows that $(dx) = n X_{\infty} + m Y_{\infty}$ with $n + m = 2g-2$, where $g = q(q+1)/2$ is the genus of the curve $\xx_q$. Since $(x) = qY_{\infty} + O - (q+1)X_{\infty}$ and $q+1$ is not divisible by $p$,  $\ord_{X_\infty} dx = -(q+2)$, that is, $n = -(q+2)$. From $n+m = 2g-2$ it follows that $m = q^2 + 2q$.
\end{proof}

\section{The automorphism group}

Again, let $\kk(\xx_q)= \kk(x,y)$, with $xy^{q+1} + x^{q+1} + y = 0$, be the function field of the Pellikaan curve $\xx_q$ and let $\lambda \in \kk$  be a primitive $(q^2+q+1)$-root of unity and define the linear collineations
$$
\sigma : (X_0,X_1,X_2) \mapsto (X_0,\lambda X_1, \lambda^{q+1} X_2).
$$
and
$$
\tau : (X_0,X_1,X_2) \mapsto (X_2,X_0,X_1).
$$
It is straightforward to see that the automorphism group of $\xx_q$ in $PG(2,\kk)$ contains $S \rtimes C_3$ as a subgroup, where $S= \langle \sigma \rangle$ and $C_3 = \langle \tau \rangle$.

Observe that $S$ has order $q^2+q+1$. Therefore, going back to the original equation \eqref{1} of $\xx$, $S$ becomes a Singer cycle of $PG(2,q)$.

To show that $S \rtimes C_3$ is actually the whole automorphism group of $\xx_q$ over $\kk$, we need some lemmas.

\begin{lemma}\label{9}
If $Q$ is a point in $\xx_q$ and $t_Q$ is its tangent line, then
$$
I(Q, \xx_q \cap t_Q) =
\left\{
\begin{array}{ccc}
 q+1, &  \text{if}  & Q \in \{  X_{\infty}, Y_{\infty}, O \}  \\
  2, &  \text{if}  &   Q  \not\in \{  X_{\infty}, Y_{\infty}, O \}.
\end{array}
\right.
$$
\end{lemma}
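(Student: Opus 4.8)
The plan is to split the argument according to whether $Q$ is one of the three distinguished points $O,X_{\infty},Y_{\infty}$ or not. For the distinguished points there is nothing new to do: the tangent lines are $l_Y,l_Z,l_X$ as recorded just before the lemma, and the discussion preceding this section already computed
\[
I(O,l_Y\cap\xx_q)=I(X_{\infty},l_Z\cap\xx_q)=I(Y_{\infty},l_X\cap\xx_q)=q+1 .
\]
So the entire content of the lemma is the \emph{second} case: every other point of $\xx_q$ is an ordinary point of its tangent, i.e. has contact order exactly $2$. Equivalently, $\xx_q$ has no inflection point outside $\{O,X_{\infty},Y_{\infty}\}$.

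Since the only points of $\xx_q$ on the line at infinity are $X_{\infty}$ and $Y_{\infty}$, any remaining point is affine, say $Q=(a,b)$ with $g(a,b):=ab^{q+1}+a^{q+1}+b=0$ and $Q\neq O$. First I would check that the tangent at an affine point is never vertical, i.e. $g_y(a,b)=ab^{q}+1\neq 0$: from $ab^{q}=-1$ we get $ab^{q+1}=-b$, and substituting into $g(a,b)=0$ forces $a^{q+1}=0$, hence $a=0$, contradicting $ab^{q}=-1$. Thus $x$ is a local uniformizer at $Q$, the branch of $\xx_q$ through $Q$ is $y=b+\gamma_1(x-a)+\gamma_2(x-a)^2+\cdots$, and the contact order of the tangent equals $2$ exactly when $\gamma_2\neq 0$.

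The core computation is to expand $g(a+u,b+v)=0$ in $u=x-a$, $v=y-b$, keeping terms of order $\leq 2$. Because $q\geq 3$, every term coming from a $q$-th power ($u^{q}$, $v^{q}$, etc.) has order $\geq 3$, so one is left with
\[
g(a+u,b+v)=(b^{q+1}+a^{q})\,u+(ab^{q}+1)\,v+b^{q}\,uv+(\text{order}\geq 3).
\]
Substituting $v=\gamma_1u+\gamma_2u^{2}+\cdots$ and matching coefficients gives $\gamma_1=-(b^{q+1}+a^{q})/(ab^{q}+1)$ and
\[
\gamma_2=\frac{b^{q}(b^{q+1}+a^{q})}{(ab^{q}+1)^{2}} .
\]
Hence $\gamma_2=0$ forces $b=0$ or $b^{q+1}+a^{q}=0$, and I would close both cases with the equation of $\xx_q$: if $b=0$ then $a^{q+1}=0$, so $Q=O$; and if $a^{q}=-b^{q+1}$ then $a^{q+1}=-ab^{q+1}$, so $g(a,b)=0$ collapses to $b=0$, again $Q=O$. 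Therefore $\gamma_2\neq 0$ at every affine point other than $O$, the contact order is $2$ there, and the lemma follows.

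The honest work, and the step I expect to be the real obstacle, is exactly the verification that the quadratic coefficient $\gamma_2$ cannot vanish away from $O$; everything else is bookkeeping. In odd characteristic this can be packaged more conceptually: the Hessian of the homogeneous equation of $\xx_q$ is a nonzero scalar times $(X_0X_1X_2)^{q}$, so the inflection points are cut out by the three coordinate lines, whose intersection with $\xx_q$ is precisely $\{O,X_{\infty},Y_{\infty}\}$. I would nonetheless keep the local expansion as the main argument, because this Hessian shortcut degenerates in characteristic $2$ (the Hessian vanishes identically), whereas the computation above is uniform in the characteristic. The one place the hypothesis is genuinely used is $q\geq 3$, which keeps the $q$-th power terms in order $\geq 3$; the case $q=2$ (the Klein quartic, with its exceptional automorphism group $PSL(2,7)$) really does behave differently and must be excluded.
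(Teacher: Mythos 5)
Your proof is correct and follows essentially the same route as the paper: both arguments expand the curve locally at an affine point $Q=(a,b)$ as $y=b+\gamma_1t+\gamma_2t^2+\cdots$ and obtain contact order exactly $2$ by checking that the quadratic coefficient is nonzero, using the curve equation to exclude the degenerate case (which only occurs at $O$). Your explicit derivation of $\gamma_2$ (the paper merely states the parametrization coefficients without justification) and your remark that the computation requires $q\geq 3$ are both accurate --- indeed the lemma as printed does not exclude $q=2$, where the second case genuinely fails for the Klein quartic, so flagging this is a point in your favour rather than a gap.
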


\begin{proof}
We have this result for the points in the fundamental triangle, see section $3$. Suppose $Q=(a,b)$ with $ab^{q+1}+a^{q+1}+b=0$ and $ab \neq 0$. The tangent line $t_Q$ to $\xx_q$ at $Q$ is given by
$$
T(X,Y) = \frac{b}{a}X + \frac{a^{q+1}}{b}Y + ab^{q+1} = 0.
$$
A primitive parametrization of $\xx_q$ at $Q$ is given by
$$
x(t) = a + t
$$
$$
y(t) = b - \frac{b^2}{a^{q+2}}t - \frac{b^{q+3}}{a^{2q+3}}t^2 - \frac{b^{2q+4}}{a^{3q+4}}t^3 - \cdots.
$$
Therefore $T(x(t),y(t)) =  - (b/a)^{q+2}t^2 + \cdots$ has order $2$, that is, $I(Q, \xx_q \cap t_Q)=2$.
\end{proof}

\begin{lemma}\label{10}
Every automorphism in $\aut(\xx_q)$ preserves the triangle $\{  X_{\infty}, Y_{\infty}, O \}$.
\end{lemma}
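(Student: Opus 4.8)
The plan is to exploit Lemma \ref{9}, which singles out $X_{\infty}$, $Y_{\infty}$ and $O$ as the only points of $\xx_q$ whose tangent line meets the curve with contact order $q+1$, every other point of $\xx_q$ having contact order exactly $2$. Since $q\ge 2$ forces $q+1\ge 3>2$, the set $\{X_{\infty},Y_{\infty},O\}$ is intrinsically characterised among the points of $\xx_q$ by this extremal tangential behaviour. It therefore suffices to show that each automorphism acts by a linear collineation of $PG(2,\kk)$ that respects tangent lines and intersection multiplicities, for then it must permute the three distinguished points.

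First I would record that every element of $\aut(\xx_q)$ is induced by a projective transformation of $PG(2,\kk)$. This rests on the classical fact that a nonsingular plane curve of degree $d\ge 4$ has all of its automorphisms linear: the plane embedding is given by the hyperplane series $g^2_d$ cut out by the lines, and this $g^2_d$ is preserved by every automorphism, so each automorphism extends to an element of $PGL(3,\kk)$ stabilising the curve (see, e.g., \cite{hirschfeld-korchmaros-torres2008}). Since $\xx_q$ is nonsingular by Theorem \ref{t2} and has degree $q+2\ge 4$, every $g\in\aut(\xx_q)$ is the restriction to $\xx_q$ of some $\tilde g\in PGL(3,\kk)$ with $\tilde g(\xx_q)=\xx_q$.

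It remains to transport the dichotomy of Lemma \ref{9} along $\tilde g$. A collineation $\tilde g$ stabilising $\xx_q$ carries the tangent line $t_Q$ of $\xx_q$ at a point $Q$ to the tangent line $t_{\tilde g(Q)}$ at $\tilde g(Q)$, and it preserves intersection multiplicities of plane curves; hence $I(Q,\xx_q\cap t_Q)=I(\tilde g(Q),\xx_q\cap t_{\tilde g(Q)})$ for every $Q\in\xx_q$. Thus the property ``$I(Q,\xx_q\cap t_Q)=q+1$'' is invariant under $\tilde g$, and by Lemma \ref{9} it holds precisely for $Q\in\{X_{\infty},Y_{\infty},O\}$. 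Consequently $\tilde g$ permutes $\{X_{\infty},Y_{\infty},O\}$, i.e. every automorphism of $\xx_q$ preserves the triangle.

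The only genuinely nontrivial input is the linearity of the automorphisms; once that is granted, the conclusion is immediate from Lemma \ref{9}. I expect the main obstacle to be pinning down the linearity statement cleanly: its proof splits into the canonical case $d=4$ (for $q=2$, where $\xx_q$ is the genus-$3$ Klein quartic and automorphisms preserve the canonical embedding) and the case $d\ge 5$ (for $q\ge 3$, where one invokes the uniqueness of the $g^2_d$), and one should make sure the degree bound $q+2\ge 4$ is verified for all admissible $q$, which it is since $q\ge 2$. Equivalently, one may phrase the whole argument intrinsically by noting that the $g^2_d$ is unique, so its special (hyperflex) points are intrinsic and hence permuted by $\aut(\xx_q)$.
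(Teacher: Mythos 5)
Your proof is correct, but it follows a genuinely different route from the paper's. The paper argues intrinsically at the level of the function field: assuming a vertex $P$ of the triangle is sent to a non-vertex $Q$, it builds the adjoint curve $t_Q\cdot l_Q^{\,q-2}$ of degree $q-1$ (tangent at $Q$ times a secant through $Q$), which cuts a canonical divisor having coefficient exactly $2+(q-2)=q$ at $Q$ by Lemma \ref{9}; Riemann--Roch then shows $q+1$ is a Weierstrass gap at $Q$, contradicting the fact (from Section 3) that $q+1$ is a non-gap at each vertex, since automorphisms preserve gap sequences. You instead invoke the linearity of all automorphisms of a nonsingular plane curve of degree $q+2\ge 4$, and then read off the conclusion from the projective invariance of the tangential intersection numbers in Lemma \ref{9}, using $q+1\ne 2$. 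Both arguments are sound. Yours is shorter and makes Lemma \ref{10} an almost immediate corollary of Lemma \ref{9}, but it front-loads the nontrivial linearity theorem; this costs nothing in the end, since the paper itself invokes exactly that theorem in the very next proof (representing $\ga$ by a matrix of $PGL(3,\kk)$ in Theorem \ref{th120jun2016}). The paper's route has the side benefit of identifying $q+1$ as a non-gap precisely at the three base points, which connects with the Weierstrass semigroup computation of Section 3 and does not presuppose linearity. One small point you should make explicit if you write this up: the affine points with a vanishing coordinate all coincide with $O$, so Lemma \ref{9} really does cover every point of $\xx_q$ and the dichotomy is exhaustive.
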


\begin{proof}
Let $\ga \in \aut(\xx_q)$ and suppose $\ga: P \mapsto Q$ with $P \in \{  X_{\infty}, Y_{\infty}, O \}$ and $Q \notin \{  X_{\infty}, Y_{\infty}, O \}$. Consider the lines $t_Q$ and $l_Q$ given by
$$
t_Q :  \;  T(X,Y)=0
$$
$$
l_Q: \; L(X,Y) = 0
$$
such that $t_Q$ is the tangent line to $\xx_q$ at $Q$ and $l_Q$ is a secant line through $Q$. Consider the curve $\cC$ of degree $q-1$ given by
$$
T(X,Y)L(X,Y)^{q-2}=0.
$$
By the Lemma \ref{9},
$$
I(Q, \xx_q \cap \cC) = I(Q, \xx_q \cap t_Q) + (q-2)I(Q, \xx_q \cap l_Q) = 2 + (q-2) = q.
$$
Observe that $W := \xx_q \cdot \cC$ is a canonical divisor such that $\cL(W-qQ) \neq \cL(W-(q+1)Q)$. Thus, by Riemann-Roch Theorem, $\ell((q+1)Q) = \ell(qQ)$. Hence $q+1$ is a gap number at $Q$, but this is a contradiction as $q+1$ is a non-gap at $P$.
\end{proof}

\begin{theorem}
\label{th120jun2016}
$\aut(\xx_q) = S \rtimes C_3$.
\end{theorem}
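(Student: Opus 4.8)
The plan is to use Lemma~\ref{10}: since every automorphism preserves the triangle $\{X_\infty,Y_\infty,O\}$, restriction to these three points gives a homomorphism $\pi\colon\aut(\xx_q)\to\Sym(3)$. As $S\rtimes C_3\subseteq\aut(\xx_q)$, with $\sigma$ fixing each vertex and $\tau$ permuting them cyclically, the image of $\pi$ already contains the $3$-cycle $\pi(\tau)$. I would therefore reduce the theorem to two claims: $\ker\pi=S$ and $\pi$ has image of order $3$. Granting both, $|\aut(\xx_q)|=3(q^2+q+1)=|S\rtimes C_3|$, and since $S\rtimes C_3$ is already a subgroup, the equality $\aut(\xx_q)=S\rtimes C_3$ follows at once.

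For the kernel, take $\alpha\in\ker\pi$, so that $\alpha$ (hence $\alpha^{-1}$) fixes each of $O,X_\infty,Y_\infty$. Then the divisor $(x)=O+qY_\infty-(q+1)X_\infty$ is invariant, so $\alpha^*x$ and $x$ have the same divisor and differ by a constant: $\alpha^*x=\kappa x$, and likewise $\alpha^*y=\mu y$, with $\kappa,\mu\in\kk^\times$. Applying $\alpha^*$ to the defining relation $xy^{q+1}+x^{q+1}+y=0$, and using that this is, up to a scalar, the only $\kk$-linear relation among $xy^{q+1}$, $x^{q+1}$, $y$ (no two of which are proportional, since their divisors differ), I would force $\kappa\mu^{q+1}=\kappa^{q+1}=\mu$. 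This solves to $\kappa=\mu^{-q}$ and $\mu^{q^2+q+1}=1$, so $|\ker\pi|\le q^2+q+1$. Since $S\subseteq\ker\pi$ has exactly this order, $\ker\pi=S$.

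The delicate point, and the one I expect to be the main obstacle, is showing that the image of $\pi$ is no larger than $C_3$, i.e.\ that no automorphism induces a transposition. Suppose $\alpha$ fixes $X_\infty$ and interchanges $Y_\infty$ and $O$. Pulling back $(x)$ gives $(\alpha^*x)=qO+Y_\infty-(q+1)X_\infty$, so $\alpha^*x$ has pole divisor exactly $(q+1)X_\infty$ and thus lies in $\cL((q+1)X_\infty)$. By the Weierstrass semigroup at $X_\infty$, which equals $\langle q+1,2q+1,\dots\rangle$ because $\tau$ carries the three vertices onto one another, the only non-gaps $\le q+1$ are $0$ and $q+1$; hence $\ell((q+1)X_\infty)=2$ and $\cL((q+1)X_\infty)=\langle 1,x\rangle$. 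Therefore $\alpha^*x=c_0+c_1x$. Evaluating at $O$, where $x$ vanishes, forces $c_0=0$; but then $\alpha^*x=c_1x$ has zero divisor $O+qY_\infty$, whereas it should equal $qO+Y_\infty$. As $q>1$ these differ, a contradiction. Conjugating by $\tau$ eliminates the remaining two transpositions, so the image of $\pi$ is exactly $\langle\pi(\tau)\rangle\cong C_3$.

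Combining the two claims gives $|\aut(\xx_q)|=3(q^2+q+1)$ and hence $\aut(\xx_q)=S\rtimes C_3$. I would stress that the argument uses Lemma~\ref{10} in an essential way, and that this is precisely where the hypothesis $q>2$ enters: for $q=2$ the curve $\xx_2$ is the Klein quartic, no triangle of points is preserved, and $\aut(\xx_2)\cong\PSL(2,7)$ is much larger.
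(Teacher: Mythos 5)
Your proof is correct, but it follows a genuinely different route from the paper's. The paper invokes the fact that every automorphism of the non-singular plane curve $\xx_q$ is induced by a linear collineation, represents $\ga$ by a matrix $A\in PGL(3,\kk)$, and then runs an explicit matrix computation in each of the three cases (all vertices fixed, none fixed, exactly one fixed), deriving the contradiction in the last case from the impossibility of the polynomial identity $\xi\eta^{q+1}YX^{q+1}+\xi^{q+1}Y^{q+1}+\eta X=c(XY^{q+1}+X^{q+1}+Y)$. You avoid the linearity theorem altogether: you package Lemma~\ref{10} into a homomorphism $\pi\colon\aut(\xx_q)\to\Sym(3)$, identify $\ker\pi$ with $S$ by pure divisor reasoning ($\ga^*x=\kappa x$, $\ga^*y=\mu y$, uniqueness up to scalar of the linear relation among $xy^{q+1},x^{q+1},y$, giving $\kappa^{q^2+q+1}=1$ and $\mu=\kappa^{q+1}$), and exclude transpositions by a Riemann--Roch computation: $\cL((q+1)X_\infty)=\langle 1,x\rangle$ via the Weierstrass semigroup of Section~3, so a swap of $O$ and $Y_\infty$ would force $\ga^*x=c_1x$ with the wrong zero divisor. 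Your order count $|\aut(\xx_q)|=3(q^2+q+1)$ then finishes cleanly, and is in fact tidier than the paper's treatment of the vertex-free case (where it writes ``$\ga=\tau$ or $\ga=\tau^2$'' rather than $\ga\in S\tau\cup S\tau^2$). What the paper's approach buys is brevity, at the price of quoting the non-trivial theorem that automorphisms of smooth plane curves of degree at least $4$ are linear; what yours buys is a self-contained function-field argument resting only on Lemma~\ref{10} and the semigroup theorem. Your closing caveat about $q=2$ is apt: the theorem as stated in the paper carries no hypothesis on $q$, yet it fails for the Klein quartic, and the breakdown is indeed located in Lemmas~\ref{9}--\ref{10} (for $q=2$ there are flexes outside the fundamental triangle, so $q+1$ fails to be a gap off the triangle and the triangle is not preserved).
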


\begin{proof}
Let $\ga \in \aut(\xx_q)$. Since $\xx_q$ is non-singular, $\ga$ can be represented as a matrix $A$ in $PGL(3,\kk)$. By the Lemma \ref{10}, $\ga$ preserves the fundamental triangle. First, suppose that $\ga$ fixes all vertices of the fundamental triangle, then
$$
A =
\left(
\begin{array}{ccc}
\xi  & 0  & 0  \\
 0 & \eta  &  0 \\
  0& 0  &   1
\end{array}
\right).
$$
Since $\ga$ preserves $\xx_q$,
$$
\xi \eta^{q+1} xy^{q+1} + \xi^{q+1}x^{q+1} + \eta y = 0
$$
in $\kk(\xx_q)= \kk(x,y)$. Hence $\eta = \xi^{q+1}$ and $\xi^{q^2+q+1}=1$. Therefore $\ga \in S$.
Now, suppose that $\ga$ fixes no vertices of the fundamental triangle. In that case, $\ga = \tau$ or $\ga = \tau^2$. To complete the proof we only need to show that the case when $\ga$ fixes only one point in the fundamental triangle does not happen. Suppose that $\ga$ only fixes the origin, thus $\ga$ interchanges $X_{\infty}$ and $Y_{\infty}$. Hence,
$$
A =
\left(
\begin{array}{ccc}
0 & \xi  & 0  \\
 \eta & 0  &  0 \\
  0& 0  &   1
\end{array}
\right).
$$
where $\xi, \eta \in \kk$. Since $\ga$ preserves $\xx_q$,
$$
\xi \eta^{q+1} yx^{q+1} + \xi^{q+1}y^{q+1} + \eta x = 0
$$
in $\kk(x,y)$. It means that, there exists $c\neq 0$ in $\kk$ such that
$$
\xi \eta^{q+1} YX^{q+1} + \xi^{q+1}Y^{q+1} + \eta X = c(XY^{q+1} + X^{q+1} + Y),
$$
which is a contradiction. The cases when $\ga$ fixes only $X_{\infty}$ or $Y_{\infty}$ are analogues.
\end{proof}
Theorems \ref{th120jun2016} and \ref{20jun2016} have the following corollary.
\begin{corollary}
\label{cor20jun2016}
The automorphism group of $\xx$ is $S\rtimes C_3$, where $S$ is defined over $\mathbb{F}_{q}$ but $C_3$ is defined over $\mathbb{F}_{q^i}$ with $i=3(q^2+q+1)$.
\end{corollary}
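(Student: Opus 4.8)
The plan is to transport the automorphism group of $\xx_q$ onto $\xx$ along the explicit change of coordinates produced in the proof of Theorem \ref{20jun2016}. Let $T\in\PGL(3,\F_{q^{3(q^2+q+1)}})$ be the projective transformation carrying $\xx_q$ onto $\xx$ furnished by that proof; it factors as the base-change given by the matrix $M$ composed with the diagonal scaling $D=\diag(\mu,\lambda,1)$, and it is defined over $\F_{q^{3(q^2+q+1)}}$ since $M$ has entries in $\F_{q^3}$ and $\mu,\lambda\in\F_{q^{3(q^2+q+1)}}$. Because an automorphism of a nonsingular plane curve is induced by an element of $\PGL(3,\kk)$ (as used in Theorem \ref{th120jun2016}), projectively equivalent nonsingular plane curves have conjugate automorphism groups, so $\aut(\xx)=T\,\aut(\xx_q)\,T^{-1}$. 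By Theorem \ref{th120jun2016}, $\aut(\xx_q)=S\rtimes C_3$ with $S=\langle\sigma\rangle$ and $C_3=\langle\tau\rangle$; hence $\aut(\xx)=(TST^{-1})\rtimes(TC_3T^{-1})$ is abstractly $S\rtimes C_3$, and it remains only to read off the field of definition of each factor.

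First I would identify the image of $S=\langle\sigma\rangle$ with Tallini's Singer cycle of $\xx$. The scaling $D$ is diagonal and, by the proof of Theorem \ref{th120jun2016}, $S$ consists of the diagonal collineations $\diag(\xi,\xi^{q+1},1)$ with $\xi^{q^2+q+1}=1$; thus $D$ commutes with $S$, and transporting $S$ to $\xx$ reduces to conjugation by the base-change matrix $M$ alone. Now $M$ has the coordinate vectors of the base points $A_0,A_1,A_2$ as its columns, so it carries the fundamental triangle of $\xx_q$ onto $\{A_0,A_1,A_2\}$ and therefore turns the diagonal group $S$ (the collineations fixing the three vertices) into the subgroup of $\aut(\xx)$ fixing $A_0,A_1,A_2$. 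By the facts recalled in Section~2 this subgroup is exactly the Singer cycle, which lies in $\PGL(3,q)$ and is hence defined over $\F_q$.

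Finally I would treat the order-three factor $TC_3T^{-1}=M(D\tau D^{-1})M^{-1}$. As $\tau$ is a coordinate permutation defined over the prime field, $M$ lies over $\F_{q^3}$, and $D$ over $\F_{q^{3(q^2+q+1)}}$, the monomial matrix $D\tau D^{-1}$ and its $M$-conjugate have entries in $\F_{q^{3(q^2+q+1)}}$, so $C_3$ is defined over $\F_{q^i}$ with $i=3(q^2+q+1)$; the companion case \eqref{G2} is entirely symmetric. The main obstacle is to confirm that this field cannot be shrunk: because of the scalar ambiguity in $\PGL$, one must rule out the existence of a representative of $TC_3T^{-1}$ with entries in a proper subfield. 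Here I would invoke the arithmetic recorded after Theorem \ref{20jun2016}: the scalars $\mu,\lambda$ are built from a root of $X^{q^2+q+1}=(\ga^q-\ga)^{q^2+q-2}$, and $\F_{q^{3(q^2+q+1)}}$ is the smallest overfield of $\F_{q^3}$ containing such a root, so the off-diagonal entries introduced by $\tau$ genuinely require the full field while conjugation by the $\F_{q^3}$-matrix $M$ cannot remove this dependence.
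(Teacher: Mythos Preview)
Your approach is essentially the paper's: the paper offers no proof beyond the sentence ``Theorems \ref{th120jun2016} and \ref{20jun2016} have the following corollary,'' and you are simply spelling out how those two results combine via conjugation by the transformation $T$ constructed in Theorem \ref{20jun2016}. The transport of $\aut(\xx_q)$ by $T$, the observation that the diagonal $D$ commutes with the diagonal group $S$ so that $TST^{-1}=MSM^{-1}$ is the Singer cycle fixing $A_0,A_1,A_2$, and the remark that $T\tau T^{-1}$ has entries in $\F_{q^{3(q^2+q+1)}}$ are all correct and exactly what the two cited theorems give.

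One point deserves correction. Your final paragraph tries to show that $\F_{q^{3(q^2+q+1)}}$ is the \emph{smallest} field of definition for $C_3$, but the corollary does not assert this: it only says $C_3$ is defined over that field. Indeed the paper's own introduction states that the minimal field is $\F_{q^{3i}}$ for some divisor $i$ of $q^2+q+1$ depending on the particular Tallini curve, so in general the field can be strictly smaller. Your heuristic about the root of $X^{q^2+q+1}=(\ga^q-\ga)^{q^2+q-2}$ does not account for the projective scalar ambiguity you yourself flag, and in fact cannot succeed in full generality. Drop that paragraph; the corollary as stated is immediate once you have transported the two factors and observed the fields in which the entries of $MSM^{-1}$ and $M(D\tau D^{-1})M^{-1}$ lie.
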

\begin{rem}{\em{
By a result of Cossidente and Siciliano, see \cite{CS}, and \cite[Theorem 11.110]{hirschfeld-korchmaros-torres2008}, if  a plane nonsingular curve ${\mathcal{C}}$ of $PG(2,q)$ of degree $q+2$ has an automorphism group $S\rtimes C_3$ where $S$ is a Singer cycle of $PG(2,q)$ then ${\mathcal{C}}$ is projectively equivalent to the Pellikaan curve $\xx_q$ where equivalency is meant in $PG(3,q^3)$. It should be noted that the authors in \cite{CS} claimed their result was valid under a weaker condition, namely when the automorphism group of ${\mathcal{C}}$ contains $S$. But this turns out incorrect
by Theorem \ref{20jun2016} 
.
}}
\end{rem}

\section{Quotient curves}

Suppose that $q$ is a square, say $q=p^{2i}$, $i \geq 1$. Thus $q^2+q+1 = (p^{2i}+p^i+1)(p^{2i}-p^i+1)$. Let $\lambda$ be a primitive $(q^2+q+1)$-root of unity in $\kk$. A straightforward computation shows that $\alpha$ defined by
$$
\alpha(x) = \lambda x,  \;  \;   \;  \alpha(y) = \lambda^{q+1} y,
$$
is a $\kk$-automorphism of $\kk(\xx_q)$ of order $q^2+q+1$, where $\kk(\xx_q)= \kk(x,y)$, with $xy^{q+1} + x^{q+1} + y = 0$, is the function field of $\xx_q$.

Let $h = \alpha^{p^{2i}+p^i+1}$. The group $H= \langle h \rangle$ has order $p^{2i}-p^i+1$. The next results provide equations for the quotient curve $\xx_q / H$ and one of those equations turns out to be
$$
XY^{p^i+1} + X^{p^i+1} + Y=0,
$$
which is indeed the equation of the Tallini curve $\xx_{p^i}$.

\begin{proposition}
The quotient curve $\xx_q / H$ is isomorphic to the curve given by the equation
$$
X^{p^{2i}+p^i+1}Y^{q+1} + Y +1 = 0.
$$
\end{proposition}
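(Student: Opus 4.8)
The plan is to produce the quotient curve concretely by exhibiting explicit $H$-invariant generators of the fixed field $\kk(\xx_q)^H$, which is the function field of $\xx_q/H$, and then reading off the relation they satisfy. First I would record how $H$ acts. Writing $d=p^{2i}-p^i+1$ for the order of $H$ and $m=p^{2i}+p^i+1$, so that $md=q^2+q+1$, the generator $h=\alpha^{m}$ acts diagonally by $h(x)=\mu x$ and $h(y)=\mu^{q+1}y$, where $\mu=\lambda^{m}$. Since $\lambda$ is a primitive $(q^2+q+1)$-root of unity and $m\mid q^2+q+1$, the element $\mu$ is a primitive $d$-th root of unity; in particular $\mu^{d}=1$.

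Next I would guess the right invariants from the weights of the diagonal action and set
$$
X=x^{d},\qquad Y=\frac{y}{x^{q+1}}.
$$
Both are $H$-invariant: indeed $h(X)=\mu^{d}x^{d}=X$ and $h(Y)=\mu^{q+1}y/(\mu^{q+1}x^{q+1})=Y$. The key computation is then the substitution $y=Yx^{q+1}$ into the defining relation $xy^{q+1}+x^{q+1}+y=0$ of $\xx_q$. After factoring out $x^{q+1}$ and using that the surviving exponent of $x$ is $1+(q+1)^2-(q+1)=q^2+q+1$, together with $x^{q^2+q+1}=(x^{d})^{m}=X^{m}$, this collapses to
$$
X^{m}Y^{q+1}+Y+1=0,
$$
which is exactly the asserted equation with $m=p^{2i}+p^i+1$. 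I expect this step to be short but it is the heart of the argument, so I would present the exponent bookkeeping carefully.

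It then remains to prove the field-theoretic statement $\kk(X,Y)=\kk(\xx_q)^H$, i.e.\ that $X$ and $Y$ generate the \emph{whole} invariant field and not a proper subfield. I would do this by a degree count: since $x$ satisfies $Z^{d}-X=0$ over $\kk(X,Y)$ and $y=Yx^{q+1}\in\kk(x,X,Y)$, one has $[\kk(\xx_q):\kk(X,Y)]\le d$. On the other hand $\kk(X,Y)\subseteq\kk(\xx_q)^{H}$ by invariance, and $[\kk(\xx_q):\kk(\xx_q)^{H}]=|H|=d$; comparing the two forces equality $\kk(X,Y)=\kk(\xx_q)^{H}$. Thus the quotient curve $\xx_q/H$ has function field $\kk(X,Y)$ with the relation above.

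The main obstacle, and the point I would be most careful about, is not the computation but the bookkeeping that makes ``isomorphic to the curve given by the equation'' rigorous: I must confirm that $F(X,Y)=X^{m}Y^{q+1}+Y+1$ is \emph{irreducible} over $\kk$, so that it genuinely defines an irreducible plane curve whose function field is $\kk(X,Y)$. I would argue this by Gauss's lemma, viewing $F$ as a polynomial in $X$ over $\kk[Y]$: its leading and constant coefficients $Y^{q+1}$ and $Y+1$ are coprime, so $F$ is primitive, and over $\kk(Y)$ it is a binomial $X^{m}-b$ with $b=-(Y+1)/Y^{q+1}$. Since $b$ has a simple zero at $Y=-1$, it is not a proper power in $\kk(Y)$, and because $m\equiv 1\pmod p$ we have $p\nmid m$; the standard binomial-irreducibility criterion then yields irreducibility of $X^{m}-b$ over $\kk(Y)$, hence of $F$ over $\kk$. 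This closes the identification $\xx_q/H\cong\vv(F)$.
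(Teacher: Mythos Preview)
Your proof is correct and follows essentially the same route as the paper: the invariants $X=x^{p^{2i}-p^i+1}$ and $Y=y/x^{q+1}$, the degree comparison $[\kk(x,y):\kk(X,Y)]\le d=|H|$ forcing $\kk(X,Y)=\kk(x,y)^{H}$, and the substitution leading to $X^{m}Y^{q+1}+Y+1=0$ are exactly what the paper does. Your additional verification of the irreducibility of $F(X,Y)$ over $\kk$ via the binomial criterion is a point the paper leaves implicit, and it is a welcome sharpening (note also that $m=p^{2i}+p^i+1$ is always odd, so the $4\mid m$ clause of the criterion never arises).
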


\begin{proof}
Take $\xi, \eta$ from $\kk(x,y)=\kk(\xx_q)$ given by
$$
\xi = x^{p^{2i}-p^i+1},  \;  \;   \;   \eta = x^{-(q+1)}y.
$$
Clearly $h(\xi) = \xi$ and $h(\eta) = \eta$, then $\kk(\xi, \eta) \subset \kk(x,y)^H$. Note that $\kk(x,y) = \kk(\xi,\eta)(x)$ and $T^{p^{2i}-p^i+1} - \xi$ is a polynomial in $\kk(\xi,\eta)[T]$ which has $x$ as a root. Thus $[\kk(x,y):\kk(\xi,\eta)] \leq p^{2i}-p^i+1$. Note that $[\kk(x,y):\kk(x,y)^H] = \ord(H) = p^{2i}-p^i+1$, hence $[\kk(x,y)^H : \kk(\xi, \eta)]=1$, therefore $\kk(x,y)^H = \kk(\xi, \eta)$.

Finally, since $xy^{q+1} + x^{q+1} + y = 0$ and $\eta = x^{-(q+1)}y$ we get
$$
x^{q^2 + 2q +2} \eta^{q+1} + x^{q+1} + x^{q+1}\eta = 0.
$$
Thus, $x^{q^2+q+1}\eta^{q+1} + 1 + \eta = 0$. Since $\xi = x^{p^{2i}-p^i+1}$ and $q^2+q+1 = (p^{2i}+p^i+1)(p^{2i}-p^i+1)$ we get
$$
\xi^{p^{2i}+p^i+1}\eta^{q+1} + \eta +1 = 0.
$$
\end{proof}

\begin{proposition}
The quotient curve $\xx_q / H$ is isomorphic to the curve given by the equation
$$
X^{p^{2i}+p^i+1} + Y^{p^i+1} + Y^{p^i} = 0.
$$
\end{proposition}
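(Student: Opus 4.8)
The plan is to imitate the proof of the preceding proposition: rather than transforming the cubic relation obtained there, I would produce a fresh pair of $H$-invariant generators $u,v$ of the fixed field $\kk(x,y)^H$ whose minimal relation is visibly the displayed one. The first step is to record how the generator $h=\alpha^{p^{2i}+p^i+1}$ of $H$ acts on the coordinate functions. Since $\lambda$ is a primitive $(q^2+q+1)$-th root of unity and $q^2+q+1=(p^{2i}+p^i+1)(p^{2i}-p^i+1)$, the scalar $\omega:=\lambda^{p^{2i}+p^i+1}$ is a primitive $N$-th root of unity with $N:=p^{2i}-p^i+1=\ord(H)$, and using $\omega^{p^{2i}+1}=\omega^{p^i}$ one checks that $h(x)=\omega x$ and $h(y)=\omega^{p^i}y$. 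Hence a monomial $x^ay^b$ is $H$-invariant precisely when $a+p^ib\equiv0 \pmod N$.

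Guided by this, I would take
$$u=xy^{p^i-1},\qquad v=xy^{q}.$$
Both are $H$-invariant: for $u$ one has $1+p^i(p^i-1)=N$, and for $v$ one has $1+p^i\cdot p^{2i}=p^{3i}+1=(p^i+1)N$. To obtain the relation, I would multiply the defining equation $xy^{q+1}+x^{q+1}+y=0$ of $\xx_q$ by the monomial $x^{p^i}y^{p^{3i}-1}$; the three resulting terms are exactly $v^{p^i+1}$, $u^{p^{2i}+p^i+1}$ and $v^{p^i}$, so that $u^{p^{2i}+p^i+1}+v^{p^i+1}+v^{p^i}=0$. The only delicate point here is the exponent bookkeeping, which rests on the two factorizations $p^{3i}-1=(p^i-1)(p^{2i}+p^i+1)$ and $p^{3i}+1=(p^i+1)(p^{2i}-p^i+1)$; these are what convert the $y$-exponents into the required multiples of $p^{2i}+p^i+1$ and $p^i$.

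Finally I would show $\kk(u,v)=\kk(x,y)^H$ by a degree count. Since $\kk(u,v)\subseteq\kk(x,y)^H$ and $[\kk(x,y):\kk(x,y)^H]=\ord(H)=N$, it suffices to prove $[\kk(x,y):\kk(u,v)]\le N$. This holds because $v/u=y^{N}\in\kk(u,v)$, so $y$ is a root of $T^N-v/u$ over $\kk(u,v)$, while $x=u\,y^{-(p^i-1)}\in\kk(u,v)(y)$; thus $\kk(x,y)=\kk(u,v)(y)$ is generated by one element of degree at most $N$, and comparison with the lower bound forces equality. Consequently $\kk(u,v)$ is the function field of $\xx_q/H$ and the displayed equation is a plane model of it. The main obstacle is the creative step of guessing $u,v$ together with the clearing monomial $x^{p^i}y^{p^{3i}-1}$: a direct substitution starting from the previous proposition's relation is tempting but runs into a Frobenius-type mismatch ($p^{2i}$ versus $p^i$ in the exponents), so it is cleaner to build the generators from scratch; the one genuinely non-formal point is then confirming, via the degree count above, that $u,v$ generate the full fixed field rather than a proper subfield.
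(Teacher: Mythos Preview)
Your argument is correct: the invariants $u=xy^{p^i-1}$ and $v=xy^{q}$ are $H$-fixed, the clearing monomial $x^{p^i}y^{p^{3i}-1}$ turns the defining relation of $\xx_q$ into $u^{p^{2i}+p^i+1}+v^{p^i+1}+v^{p^i}=0$, and the degree count via $v/u=y^{N}$ pins down $\kk(u,v)=\kk(x,y)^H$. So the proposition follows.

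Your route, however, differs from the paper's. The paper does \emph{not} build new invariants from scratch; it simply takes the function field $\kk(\xi,\eta)$ with $\xi^{p^{2i}+p^i+1}\eta^{q+1}+\eta+1=0$ supplied by the preceding proposition and performs a short chain of birational substitutions (replace $\eta$ by $1/\eta$, divide by $\eta^{p^{2i}+p^i+1}$, set $u=\xi/\eta$, $v=1/\eta$) to arrive at the displayed equation. In particular, your remark that a direct transformation from the previous relation ``runs into a Frobenius-type mismatch'' is off: that is precisely the approach the paper takes, and it works cleanly because $q-(p^{2i}+p^i+1)=-(p^i+1)$. The advantage of the paper's method is economy: no new degree count or generator search is needed once the previous proposition is in hand. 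The advantage of yours is self-containment: it gives an independent proof that does not invoke the earlier model, and it exhibits explicit generators $u,v$ in terms of the original $x,y$.
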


\begin{proof}
By the previous proposition, $\kk(\xx_q/H) = \kk(x,y)$, with
$$
x^{p^{2i}+p^i+1}y^{q+1} + y +1 = 0,
$$
that is,
$$
x^{p^{2i}+p^i+1} + \frac{1}{y^q} + \frac{1}{y^{q+1}} = 0.
$$
Putting $\xi = x$ and $\eta = 1/y$ gives
$$
\xi^{p^{2i}+p^i+1} + \eta^{q} + \eta^{q+1} = 0.
$$
Dividing by $\eta^{p^{2i}+p^i+1}$ and using $q=p^2$ gives
$$
\frac{\xi^{p^{2i}+p^i+1}}{\eta^{p^{2i}+p^i+1}} + \frac{1}{\eta^{p^i+1}} + \frac{1}{\eta^{p^i}} = 0.
$$
Replacing $u=\xi / \eta$ and $v= 1/\eta$ gives
$$
u^{p^{2i}+p^i+1} + v^{p^i+1} + v^{p^i} = 0.
$$
\end{proof}

\begin{theorem}
\label{th220jun2016}
The quotient curve $\xx_q / H$ is isomorphic to the curve $\xx_{p^i}$ given by the equation
$$
XY^{p^i+1} + X^{p^i+1} + Y =0.
$$
\end{theorem}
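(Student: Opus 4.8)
The plan is to build directly on the preceding Proposition, which already identifies $\xx_q/H$ with the plane curve $\cC$ of equation $X^{p^{2i}+p^i+1}+Y^{p^i+1}+Y^{p^i}=0$; it then remains only to exhibit a birational transformation carrying $\cC$ onto $\xx_{p^i}$, whose equation is $XY^{p^i+1}+X^{p^i+1}+Y=0$. Since $q=p^{2i}=(p^i)^2$, writing $m=p^i$ for brevity, the exponent $p^{2i}+p^i+1$ equals $m^2+m+1$, and this identity is the arithmetic fact that will make the two equations match once denominators are cleared.

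First I would search for the change of variables among monomial (Laurent) substitutions, since deciding whether a monomial map sends the trinomial equation of $\cC$ to the trinomial equation of $\xx_{p^i}$ reduces to a small linear system: one has to match, up to multiplication by a common monomial, the three exponent vectors of the one equation with the three of the other. Carrying out this bookkeeping singles out the substitution
$$
x=X,\qquad y=\frac{X^{p^i+1}}{Y},
$$
equivalently $X=x$ and $Y=x^{p^i+1}/y$. The exponent matrix of this map is unimodular, which is exactly what guarantees that it is invertible.

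The verification is then routine. Writing $\kk(\cC)=\kk(X,Y)$ with $X^{m^2+m+1}+Y^{m+1}+Y^m=0$ and setting $x=X$, $y=X^{m+1}/Y$, I would multiply $xy^{m+1}+x^{m+1}+y$ by $Y^{m+1}/X^{m+1}$; using $q=m^2$, the three resulting terms collapse to $X^{m^2+m+1}$, $Y^{m+1}$ and $Y^{m}$, whose sum vanishes by the defining relation of $\cC$. Hence $xy^{m+1}+x^{m+1}+y=0$, i.e.\ $x,y$ satisfy the equation of $\xx_{p^i}$. Finally I would record that $\kk(x,y)=\kk(X,Y)$, since $X=x$ and $Y=x^{p^i+1}/y$ lie in $\kk(x,y)$ while $x,y$ lie in $\kk(X,Y)$; thus the substitution is an isomorphism of function fields and $\xx_q/H\cong\xx_{p^i}$.

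The one genuine obstacle is locating the correct substitution; everything afterward is formal. The search is guided by the observation that in $\kk(\cC)$ the subfield $\kk(X)$ has index $m+1$, matching $[\kk(\xx_{p^i}):\kk(x)]=m+1$, which suggests keeping $x=X$ and hunting for $y$ as a monomial in $X$ and $Y$; the denominator $Y$ (rather than some higher power) is then forced by the requirement that the large exponent $m^2+m+1$ reduce to $m+1$ when the equation is cleared of denominators.
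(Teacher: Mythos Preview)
Your proof is correct and is essentially the paper's argument run in the opposite direction: the paper starts from $\kk(\xx_{p^i})$ and, via the substitution $u=x$, $v=-xy^{p^i}-1$, lands on the equation $u^{p^{2i}+p^i+1}+v^{p^i+1}+v^{p^i}=0$ of $\cC$. Since the defining relation of $\xx_{p^i}$ gives $-xy^{p^i}-1=x^{p^i+1}/y$, the paper's map is precisely the inverse of your monomial substitution $x=X$, $y=X^{p^i+1}/Y$, so the two proofs use the same birational isomorphism.
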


\begin{proof}
Consider the function field $\kk(\xx_{p^i}) = \kk(x,y)$, with $xy^{p^i+1} + x^{p^i+1} + y =0$. We have that
$$
x(y^{p^i+1} + x^{p^i}) + y =0.
$$
Raising to the $p^i$-th power gives
$$
x^{p^i}(y^{p^i+1} + x^{p^i})^{p^i} + y^{p^i} = 0.
$$
Multiplying by $x$ gives
$$
x^{p^i+1}(y^{p^i+1} + x^{p^i})^{p^i} + xy^{p^i} = 0.
$$
This also can be written as,
$$
x^{p^{2i}+p^i+1} + (-xy^{p^i} -1)^{p^i}(-xy^{p^i}) = 0.
$$
Putting $u=x$ and $v= -xy^{p^i}-1$ gives
$$
u^{p^{2i}+p^i+1} + v^{p^i+1} + v^{p^i} =0.
$$
Note that $\kk(u,v) = \kk(x,y^{p^i}) \subset \kk(x,y)$. Since $xy^{p^i+1} + x^{p^i+1} + y =0$,
$$
y = - \frac{x^{p^i+1}}{xy^{p^i} + 1},
$$
that is, $y$ belongs to $\kk(x,y^{p^i}) $. Therefore, $\kk(u,v)=\kk(x,y)$.
\end{proof}

\section{The Hasse-Witt invariant}

In this section $q=p$ is a prime number. Let $\Sigma =\kk(x,y)$, with $xy^{p+1} + x^{p+1} + y = 0$, be the function field of the Pellikaan curve $\xx_p$, $\gg$ its genus and $\gamma$ its Hasse-Witt invariant. The partial derivative of $F(X,Y) = XY^{p+1} + X^{p+1} + Y$ with respect to $Y$ is $F_Y(X,Y) = XY^p +1$.

Consider $\Delta_{\Sigma} = \{  udx  \; | \;  u \in \Sigma\}$ the differential module of $\Sigma$ and $C : \Delta_{\Sigma}^{(1)} \rightarrow \Delta_{\Sigma}^{(1)}$ the Cartier operator defined on the space of holomorphic differentials
$$
\Delta_{\Sigma}^{(1)} = \{  w \in \Delta_{\Sigma}  \; | \;  (w) \geq 0 \}.
$$

\begin{theorem}
The Hasse-Witt invariant of $\xx_p$ is equal to its genus.
\end{theorem}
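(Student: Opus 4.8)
The plan is to show that the Cartier operator $C$ is injective on the space $\Delta_{\Sigma}^{(1)}$ of holomorphic differentials. Since $\Delta_{\Sigma}^{(1)}$ has finite dimension $\gg$ over the perfect field $\kk$ and $C$ is $p^{-1}$-linear (additive with $C(u^pw)=uC(w)$), injectivity forces bijectivity; then the Hasse-Witt invariant $\gamma=\dim_{\kk}\bigcap_{n\ge1}C^n(\Delta_{\Sigma}^{(1)})$ equals $\gg$, i.e. $\xx_p$ is ordinary. Because $\ker C$ consists exactly of the exact differentials, and $C$ already carries $\Delta_{\Sigma}^{(1)}$ into itself, injectivity of $C$ on $\Delta_{\Sigma}^{(1)}$ is precisely the assertion that no nonzero holomorphic differential is exact. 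This is the statement I would actually establish, which is the route announced at the end of the Introduction.

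As a smooth plane curve of degree $p+2$, $\xx_p$ has the adjoint differentials $\eta_{ij}=\dfrac{x^iy^j\,dx}{F_Y}$, with $F_Y=xy^p+1$ and $i,j\ge0$, $i+j\le p-1$, forming a $\kk$-basis of $\Delta_{\Sigma}^{(1)}$; there are $\binom{p+1}{2}=\gg$ of them. I would compute $C$ on this basis. The key simplifications are that $F_Y^p=1+(xy^p)^p\in\Sigma^p$ and, since $\binom{p-1}{m}\equiv(-1)^m\pmod p$, that $F_Y^{p-1}=\sum_{m=0}^{p-1}(-1)^m x^m y^{pm}$. Writing $\dfrac{x^iy^j}{F_Y}=\dfrac{x^iy^jF_Y^{p-1}}{F_Y^p}$ and pulling the $p$-th power $F_Y^{-p}$ through the $p^{-1}$-linear operator, the computation of $C(\eta_{ij})$ reduces to extracting the coefficient of $x^{p-1}$ in the expansion of $x^iy^jF_Y^{p-1}$ in the basis $\{1,x,\dots,x^{p-1}\}$ of $\Sigma$ over $\Sigma^p$ (equivalently, to the derivative form of the Cartier operator $C(\xi\,dx)=(-\,d^{p-1}\xi/dx^{p-1})^{1/p}\,dx$).

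I expect the outcome to be the clean rule $C(\eta_{ij})=\pm\,\eta_{j,\,p-1-i-j}$. Indexing the basis by the barycentric triple $(i,j,k)$ with $k=p-1-i-j\ge0$, the map $(i,j,k)\mapsto(j,k,i)$ is exactly the permutation induced on the adjoint monomials by the order-three automorphism $\tau$ of the previous section, so $C$ is a signed permutation of the basis and hence bijective. On the two boundary families the rule is immediate from the simplifications above: one finds $C(\eta_{i0})=(-1)^{p-1-i}\eta_{0,p-1-i}$ and, treating $y$ as the separating variable through $dx/F_Y=-dy/F_X$ with $F_X=y^{p+1}+x^p$, likewise $C(\eta_{0j})=(-1)^{p-1-j}\eta_{j,p-1-j}$. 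For $p=2,3$ one checks directly that $C$ permutes the whole basis (a single $3$-cycle, respectively two $3$-cycles), confirming bijectivity; once $C$ is bijective its kernel is $0$, no holomorphic differential is exact, and $\gamma=\gg$.

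The main obstacle is the interior of the index triangle: for $(i,j)$ with $i,j\ge1$ the coefficient extraction couples the explicit powers of $x$ with the powers of $x$ hidden in the $\Sigma^p$-expansion of $y^j$ coming from the curve relation $y(xy^p+1)=-x^{p+1}$. Establishing the identity $[x^{p-1}]_{\Sigma^p}\!\big(x^iy^jF_Y^{p-1}\big)=\pm(x^jy^{p-1-i-j})^p$ in general therefore amounts to summing a family of binomial-type coefficients modulo $p$ and showing that all of them cancel except one; the mechanism is the telescoping of the numerator against the factor $1+(xy^p)^p$ (as in the model computation $2u^2v^3-u=-u(uv^3+1)$ for $p=3$, with $u=x^p$, $v=y^p$). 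The order-three symmetry reduces the verification to one representative per $\tau$-orbit, but a strictly interior orbit must still be handled by this explicit cancellation, and making that cancellation uniform in $p$ is the crux of the argument.
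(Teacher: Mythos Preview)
Your overall strategy coincides with the paper's: both show that the Cartier operator is injective on $\Delta_\Sigma^{(1)}$, hence $\gamma=\gg$. Your sharper expectation, that $C(\eta_{ij})$ is a nonzero scalar multiple of $\eta_{j,\,p-1-i-j}$, is in fact correct and is exactly what the paper's computation yields when specialized to a single basis element. But as written the proposal has a genuine gap: you verify only the boundary cases and $p\le 3$, and for the interior of the index triangle you anticipate having to cancel a family of binomial-type terms arising from the $\Sigma^p$-expansion of $y^j$, without indicating how that cancellation is to be carried out.

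The missing idea, which the paper supplies, is that one should \emph{not} expand $y^j$ in the $p$-basis at all. From the curve equation $xy^{p+1}+x^{p+1}+y=0$ one has $1/y=-F_Y/x^{p+1}$, so writing $y^j=y^p(1/y)^{p-j}$ gives
\[
\frac{x^i y^j}{F_Y}=(-1)^{p-j}\,\frac{y^p}{x^{(p+1)(p-j)-i}}\,F_Y^{\,p-1-j}
=\sum_{k=0}^{p-1-j}(-1)^{p-j}\binom{p-1-j}{k}\bigl(x^{\,j-p-1}y^{\,k+1}\bigr)^p x^{\,i+j+k}.
\]
Every occurrence of $y$ is now inside a $p$-th power, and since $i+j+k\le i+p-1\le 2p-2$, only the single term $k=k_0:=p-1-i-j$ contributes to the coefficient of $x^{p-1}$ over $\Sigma^p$; there is no sum to telescope. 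This yields $C(\eta_{ij})=c_{ij}\,x^{\,j-p-1}y^{\,p-i-j}\,dx$ with $c_{ij}^{\,p}=(-1)^{p-j}\binom{p-1-j}{i}\not\equiv 0\pmod p$, and from the curve equation one checks $x^{\,j-p-1}y^{\,p-i-j}\,dx=-\eta_{j,\,p-1-i-j}$, uniformly in $(i,j)$. The paper runs this same manipulation on a general combination $\sum a_{ij}\eta_{ij}$ in $\ker C$: the vanishing of the $x^{p-1}$ coefficient becomes a relation $\sum c_{ij}\,x^{j}y^{\,p-i-j}=0$ in $\Sigma$ of total degree at most $p<\deg F$, hence identically zero, forcing all $a_{ij}=0$. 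Either packaging gives the result once the substitution $1/y=-F_Y/x^{p+1}$ is made; without it, the proof is incomplete.
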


\begin{proof}
Let $w$ be an exact differential in $\Delta_{\Sigma}^{(1)}$, that is, $C(w)=0$. Then $w$ can be written in the form
$$
w = (u_1^p + u_2^px + \cdots + u_{p-2}^px^{p-2}) dx.
$$
From \cite{Gorenstein}, a basis for the $\kk$-vector space $\Delta_{\Sigma}^{(1)}$ is given by
$$
\mathfrak{B} = \left\{  \frac{x^i y^j}{F_Y(x,y)}dx  \;  \vert  \;  0 \leq i+j  \leq p-1 \right\}.
$$
Thus
$$
u_1^p + u_2^px + \cdots + u_{p-2}^px^{p-2}  =   \frac{u(x,y)}{F_Y(x,y)}
$$
where $u(X,Y)$ is a polynomial in $\kk[X,Y]$ of degree at most $p-1$. Let
$$
u(x,y) = \sum_{i+j \leq p-1 } a_{ij} x^i y^j.
$$
Since $xy^{p+1} + x^{p+1} + y=0$ then
$$
\frac{1}{y} = - \frac{xy^p+1}{x^{p+1}}.
$$
Thus we have
\begin{equation*}
     \begin{aligned}
      \frac{u(x,y)}{F_Y(x,y)}
      & = \frac{u(x,y)}{xy^p+1} \\
      & = \frac{1}{xy^p+1} \sum_{i+j \leq p-1 } a_{ij} x^i y^j \\
      & = \frac{y^p}{xy^p+1} \sum_{i+j \leq p-1 } a_{ij} x^i  \left(  \frac{1}{y}  \right)^{p-j}   \\
      & = \frac{y^p}{xy^p+1} \sum_{i+j \leq p-1 } a_{ij} x^i  \left( - \frac{xy^p+1}{x^{p+1}}  \right)^{p-j} \\
      & = \sum_{i+j \leq p-1 } (-1)^{j+1} a_{ij} \frac{y^p}{x^{p^2+p-jp}} x^{i+j} (xy^p+1)^{p-1-j}
     \end{aligned}
\end{equation*}
\begin{equation*}
     \begin{aligned}
       & = \sum_{i+j \leq p-1 } (-1)^{j+1} a_{ij} \frac{y^p}{x^{p^2+p-jp}} x^{i+j} \sum_{k=0}^{p-1-j} \binom{p-1-j}{k} x^ky^{kp} \\
       & = \sum_{i+j \leq p-1 } \sum_{k=0}^{p-1-j} (-1)^{j+1} \binom{p-1-j}{k} a_{ij}(x^{jp-p^2-p} y^{(k+1)p}) x^{i+j+k} \\
       & = \sum_{i+j \leq p-1 } \sum_{k=0}^{p-1-j} w_{ijk}^p x^{i+j+k}
     \end{aligned}
\end{equation*}
where
$$
w_{ijk} = \left( (-1)^{j+1} \binom{p-1-j}{k} a_{ij} \right)^{1/p} x^{j-p-1} y^{k+1}.
$$
Hence,
$$
u_1^p + u_2^px + \cdots + u_{p-2}^px^{p-2} = \sum_{i+j \leq p-1 } \sum_{k=0}^{p-1-j} w_{ijk}^p x^{i+j+k}.
$$
The term of degree $p-1$ in $x$ on the right side is
$$
\left( \sum_{i+j \leq p-1 } w_{ijk_0}^p \right) x^{p-1}
$$
where $k_0 = p-1-j-i$. It means that
$$
 \sum_{i+j \leq p-1 } w_{ijk_0}^p = 0
$$
$$
\sum_{i+j \leq p-1 } w_{ijk_0} = 0
$$
$$
\sum_{i+j \leq p-1 } \left( (-1)^{j+1} \binom{p-1-j}{k_0} a_{ij} \right)^{1/p} x^{j-p-1} y^{k_0+1}=0
$$
$$
\sum_{i+j \leq p-1 } \left( (-1)^{j+1} \binom{p-1-j}{p-1-j-i} a_{ij} \right)^{1/p} x^{j-p-1} y^{p-(i+j)} =0
$$
$$
\sum_{i+j \leq p-1 } \left( (-1)^{j+1} \binom{p-1-j}{p-1-j-i} a_{ij} \right)^{1/p} x^{j} y^{p-(i+j)} =0.
$$

Since the last equation has degree at most $p$, it must be equal to zero. Thus all coefficients $a_{ij}$ are equal to zero, and therefore $u(x,y)=0$.
This shows that $w=0$, that is, the Cartier operator $C : \Delta_{\Sigma}^{(1)} \rightarrow \Delta_{\Sigma}^{(1)}$ has trivial kernel.

Let $V^0$ be the space of all $w \in \Delta_{\Sigma}^{(1)}$ such that $C^i(w)=0$ for some $i\geq 1$. Note that if $C^i(w)=0$, then $C^{i-1}(w) \in \ker(C)$. Hence $V^0 = \{ 0\}$. This implies that the Hasse-Witt matrix $(h_{ij})$ over $\kk$ of $C$ has maximum rank equal to $\gg$, and consequently the matrix
$$
M = (h_{ij})(h_{ij}^{p})\cdots(h_{ij}^{p^{g-1}})
$$
has rank $\gg$. Therefore, $\gamma = \gg$.
\end{proof}

\vspace{0,5cm}\noindent {\em Acknowledgment}:

\vspace{0.2cm}\noindent The author thanks G\'abor Korchm\'aros for his guidance and helpful discussions
about the topic of the present research which was carried out when the author was visiting the Universit\`a degli Studi della Basilicata (Italy) with a financial support of FAPESP-Brazil (grant 2015/10181-8).


\vspace{0,5cm}\noindent {\em Authors' addresses}:

\vspace{0.2cm}\noindent Gregory Duran Cunha\\ Instituto de Ciências Matemáticas e de Computação\\  Universidade de São Paulo\\ Avenida Trabalhador São-carlense, 400\\ 13566-590 - São Carlos - SP (Brazil).\\E--mail: {\tt gduran@usp.br } \\

\end{document}